\documentclass[12pt,reqno]{amsart}

\addtolength{\textwidth}{2cm} \addtolength{\hoffset}{-1cm}
\addtolength{\marginparwidth}{-1cm} \addtolength{\textheight}{2cm}
\addtolength{\voffset}{-1cm}

% below are some packages that are needed for certain symbols, graphics, colors.
% safest to just include these.
\usepackage[shortlabels]{enumitem}
\usepackage{times}
\usepackage[T1]{fontenc}
\usepackage{mathrsfs}
\usepackage{latexsym}
\usepackage[dvips]{graphics}
\usepackage{epsfig}
\usepackage{amsmath,amsfonts,amsthm,amssymb,amscd, mathtools}
\input amssym.def
\input amssym.tex
\usepackage{color}
\usepackage{verbatim}
\usepackage{hyperref}
\usepackage{pgf,tikz}
\usetikzlibrary{arrows}

\usepackage{hyperref}

\usepackage{caption}
\usepackage{subcaption}
\usepackage{hyperref}

%\usepackage{graphicx, multicol, latexsym, amsmath, amssymb}
%\usepackage{blindtext}
%j\usepackage{subfigure}
%\usepackage{capt-of}
%\usepackage{tabu}
%\usepackage{booktabs}% for better rules in the table
%\usepackage{anysize} % Soporte para el comando \marginsize

\usepackage[utf8]{inputenc}

%================================================================================================
%   THIS IS WHERE YOU PUT SHORTCUT DEFINITIONS	
%================================================================================================
%%%%%%%%%%%%%%%%%%%%%%%%%%%%%%%%%%%%%%%%%%%%%%%
% below are shortcuts for equation, eqnarray,
% itemize and enumerate environments

\newcommand\be{\begin{equation}}
\newcommand\ee{\end{equation}}
\newcommand\bea{\begin{eqnarray}}
\newcommand\eea{\end{eqnarray}}
\newcommand\bi{\begin{itemize}}
\newcommand\ei{\end{itemize}}
\newcommand\ben{\begin{enumerate}}
\newcommand\een{\end{enumerate}}

%%%%%%%%%%%%%%%%%%%%%%%%%%%%%%%%%%%%%%%%%%%%%%%%
% Theorem / Lemmas et cetera

\newtheorem{thm}{Theorem}[section]

\newtheorem{cor}[thm]{Corollary}
\newtheorem{lem}[thm]{Lemma}

\newtheorem{defi}[thm]{Definition}

\newtheorem{rek}[thm]{Remark}

%%%%%%%%%%%%%%%%%%%%%%%%%%%%%%%%%%%%%%%%%
% shortcuts to environments
% this allows you to do textboldface: simply type \tbf{what you want in bold}

%%%%%%%%%%%%%%%%%%%%%%%%%%%%%%%%%%%%%%%%%%%%%%%%%%
% shortcut to twocase and threecase definitions

%%%%%%%%%%%%%%%%%%%%%%%%%%%%%%%%%%%%%%%%%
%Blackboard Letters

\newcommand{\R}{\ensuremath{\mathbb{R}}}

  %use in linux
\newcommand{\Oc}{\mathcal{O}}

%%%%%%%%%%%%%%%%%%%%%%%%%%%%%%%%%%%%%%%%%
% Finite Fields and Groups

%%%%%%%%%%%%%%%%%%%%%%%%%%%%%%%%%%%%%%%%%
% Fractions

  %onehalf

%%%%%%%%%%%%%%%%%%%%%%%%%%%%%%%%%%%%%%%%%
% Legendre Symbols

%%%%%%%%%%%%%%%%%%%%%%%%%%%%%%%%%%%%%%%%%
% matrix shortcuts

%%%%%%%%%%%%%%%%%%%%%%%%%%%%%%%%%%%%%%%%%
% greek letter shortcuts

                  %gives you a greek alpha

%%%%%%%%%%%%%%%%%%%%%%%%%%%%%%%%%%%%%%%%%
% general functions
               % gives the not divide symbol

%\newcommand{\ceiling}[1]{\left\lceil #1 \right\rceil}
%\newcommand{\floor}[1]{\left\lfloor #1 \right\rfloor}
 % Use \floor* for auto sizing
    % ''' \ceil*  ''' '''' ''''''
\DeclarePairedDelimiter\abs{\lvert}{\rvert}     % ''' \abs*   ''' '''' ''''''

%%%%%%%%%%%%%%%%%%%%%%%%%%%%%%%%%%%%%%%%%%%
% the following makes the numbering start with 1 in each section;
% if you want the equations numbered 1 to N (without caring about
% what section you are in, comment out the following line.
\numberwithin{equation}{section}

%%%%%%%%%%%%%%%%%%%%%%%%%%%%%%%%%%%%%%%%%%%%%%%%%%%%%%%%%%%%%%%%%%%%%%%%%%%%%%%%%%
%%%%%%%%%%%%%%%%%%%%%%%%%%%%%%%%%%%%%%%%%%%%%%%%%%%%%%%%%%%%%%%%%%%%%%%%%%%%%%%%%%
%%%%%%%%%%%%%%%%%%%%%%%%%%%%%%%%%%%%%%%%%%%%%%%%%%%%%%%%%%%%%%%%%%%%%%%%%%%%%%%%%%
%BEGIN DOCUMENT%
\begin{document}
\definecolor{qqqqff}{rgb}{0.,0.,1.}
\definecolor{ffqqqq}{rgb}{1.,0.,0.}

%%%%%%%%%%%%%%%%%%%%%%%%%%%%%%%%%%%%%%%%%
%TITLE%
\title{Characterizing optimal point sets determining one distinct triangle}
%%%%%%%%%%%%%%%%%%%%%%%%%%%%%%%%%%%%%%%%%
%AUTHORS%
\author[H. N. Brenner]{Hazel N. Brenner}
\address{Department of Mathematicas, Virginia Tech University, Blacksburg, VA 24061}
\email{\textcolor{blue}{\href{mailto:hazelbrenner@vt.edu} {hazelbrenner@vt.edu}}}

\author[J. S. Depret-Guillaume]{James S. Depret-Guillaume}
\address{Department of Mathematics, Virginia Tech University, Blacksburg, VA 24061}
\email{\textcolor{blue}{\href{mailto:jdg@vt.edu} {jdg@vt.edu}}}

\author[E. A. Palsson]{Eyvindur A. Palsson}
\address{Department of Mathematics, Virginia Tech University, Blacksburg, VA 24061}
\email{\textcolor{blue}{\href{mailto:palsson@vt.edu} {palsson@vt.edu}}}

\author[R. W. Stuckey]{Robert W. Stuckey}
\address{Department of Mathematics, Virginia Tech University, Blacksburg, VA 24061}
\curraddr{Department of Mathematical Sciences, Kent State University, Kent, OH}
\email{\textcolor{blue}{\href{mailto:robert14@vt.edu} {robet14@vt.edu}},\textcolor{blue}{\href{mailto:rstucke1@kent.edu} {rstucke1@kent.edu}}}

\subjclass[2010]{52C10 (primary), 52C35 (secondary)}

%%%%%%%%%%%%%%%%%%%%%%%%%%%%%%%%%%%%%%%%%%%%%%%%%%%%%%%%%%%%%%%%%%%%%%%%%%%%%%
%Footer Region
\keywords{One triangle problem, Erd\H{o}s problem, Optimal configurations, Finite point configurations}
\date{\today}
\thanks{The work of the third listed author was supported in part by Simons Foundation Grant \#360560.}

%%%%%%%%%%%%%%%%%%%%%%%%%%%%%%%%%%%%%%%%%%%%%%%%%%%%%%%%%%%%%%%%%%%%%%%%%%%%%%
%ABSTRACT%
\begin{abstract}
In this paper we determine the maximum number of points in $\R^d$ which form exactly $t$ distinct triangles, where we restrict ourselves to the case of $t = 1$. We denote this quantity by $F_d(t)$. It was known from the work of Epstein et al. \cite{Epstein} that $F_2(1) = 4$. Here we show somewhat surprisingly that $F_3(1) = 4$ and $F_d(1) = d + 1$, whenever $d \geq 3$, and characterize the optimal point configurations. This is an extension of a variant of the distinct distance problem put forward by Erd\H{o}s and Fishburn \cite{ErdosFishburn}.

\end{abstract}

%%%%%%%%%%%%%%%%%%%%%%%%%%%%%%%%%%%%%%%%%%%%%%%%%%%%%%%%%%%%%%%%%%%%%%%%%%%%%%
%TITLE%
\maketitle
%%%%%%%%%%%%%%%%%%%%%%%%%%%%%%%%%%%%%%%%%%%%%%%%%%%%%%%%%%%%%%%%%%%%%%%%%%%%%%
%TABLE OF CONTENTS%
\tableofcontents

%%%%%%%%%%%%%%%%%%%%%%%%%%%%%%%%%%%%%%%%%%%%%%%%%%%%%%%%%%%%%%%%%%%%%%%%%%%%%%
%INTRODUCTION%
\section{Introduction}
In 1946 Erd\H{o}s proposed his distinct distance conjecture, which stated that  any set of $n$ points in the plain will define at least $\Omega(n/\sqrt{\log n})$ distinct distances \cite{Er46}. Since that time, optimal points sets have been a heavily studied topic within the field of discrete geometry. Guth and Katz made significant progress towards proving this conjecture when they showed in 2010 that a set of $n$ points in the plane defined at least $\Omega(n/\log n)$ distinct distances. The analogous problems in dimensions 3 and higher remain open.

In 1996 Erd\H{o}s and Fishburn asked a question related to this: Given a positive integer $k$, what is the maximum number of points which can be embedded in the plane such that precisely $k$ distinct distances are defined, and can all such point configurations be characterized? In their paper, Erd\H{o}s and Fishburn characterized the optimal configurations for $1 \leq k \leq 4$, and this was extended by Shinohara for $k = 5$ and by Wei for $k = 6$. Further, Erd\H{o}s conjectured that for sufficiently large values of $k$, an optimal point configuration exists in the triangular lattice, which continues as an open conjecture. (Figure ~\ref{fig: distances} shows the optimal configurations for $k$ distinct distances in the plane when $2\leq k \leq 6$.)

\begin{center}
\begin{figure}[h]
\includegraphics[scale=0.8]{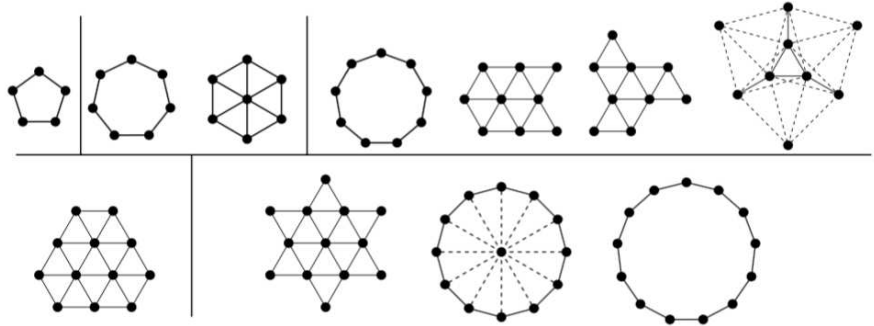}
\caption{Optimal, or maximal, point set configurations determining exactly $k$ distinct distances in the plane, for $2 \leq k \leq 6$ \cite{BMP}.  For all $k$, $2<k\leq 6$, there exists a configuration in the triangular lattice; Erd\H{o}s conjectured that this is always true when $k$ is sufficiently large.}
\label{fig: distances}
\end{figure}
\end{center}

Erd\H{o}s' distance problem can be extended to consider triangles in place of distances. Since the set of distances generated by a point set can be thought of as being determined by the collection of 2-point subsets, we may analogously consider the set of triangles formed by a point set as determined by the collection of 3-point subsets. Erd\H{o}s' distance problem then becomes: What is the minimum number of distinct triangles formed by a collection of $n$ points in the plane? Hence, the analogue of Erd\H{o}s' and Fishburn's problem is: Given $t$ distinct triangles, with $t$ fixed, what is the maximum number of points, $n$, placed in the plane which define exactly $t$ distinct triangles? Epstein et al. focused on the latter of these analogues and showed that $n = 4$ for $t = 1$, and $n = 5$ for $t = 2$ \cite{Epstein}. Maximal point sets in the plane remains an open question for higher values of $t$. As mentioned above, the higher dimensional analogues of Erd\H{o}s' distance problem are as yet open. Here we concern ourselves with the higher dimensional analogue of Erd\H{o}s' and Fishburn's question, rather than with higher values of $k$. Our main result is the following: 
\begin{thm}
  \label{thm: Main}
	Suppose $S\subset \R^d$ determines a single distinct triangle $T$.
	\begin{enumerate}
		\item \label{thmpart:equil} If $T$ is equilateral, then $S$ is contained in the set of vertices of a regular $d$-simplex, and in particular $\abs{S} \leq d+1$.
		\item \label{thmpart:nonequil} If $T$ is not equilateral, then $\abs{S}\leq 4$. 
	\end{enumerate}
\end{thm}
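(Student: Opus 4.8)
The plan is to translate the congruence hypothesis into a statement about the multiset of pairwise distances and then to analyze the resulting edge-coloring of the complete graph on $S$. Since $S$ determines only the triangle $T$, every $3$-element subset of $S$ is congruent to $T$, so every such triple carries the same multiset of three side lengths. I would record as a first observation the resulting local-to-global rigidity: coloring each pair $\{p,q\}\subseteq S$ by the distance $\abs{p-q}$, every triangle of $S$ must exhibit exactly the color pattern of $T$. The proof then splits according to whether a side length of $T$ is repeated, which is precisely the dichotomy equilateral versus non-equilateral.

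For part \ref{thmpart:equil}, suppose $T$ is equilateral with common side $a$. If $\abs{S}\geq 3$, then any two points of $S$ lie in a common triple, which is equilateral, so all pairwise distances equal $a$; that is, $S$ is equidistant. I would then invoke the standard linear-algebra bound: placing one point at the origin and writing the others as vectors $v_i$, the conditions $\abs{v_i}=a$ and $\abs{v_i-v_j}=a$ force the Gram matrix to equal $\tfrac{a^2}{2}(I+J)$, which is positive definite, so the $v_i$ are linearly independent. Hence at most $d$ of them can occur and $\abs{S}\leq d+1$; moreover the Gram matrix is completely determined, so $S$ is congruent to a subset of the vertices of a regular $d$-simplex, which gives the claimed characterization. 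This is the step I expect to carry the genuine content, since it is where the ambient dimension enters and where one must extract the characterization rather than merely the bound.

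For part \ref{thmpart:nonequil}, suppose $T$ is not equilateral. Then $T$ has a side length $\ell$ occurring exactly once among its three sides (the base of an isosceles triangle, or any side of a scalene one). Let $G_\ell$ denote the graph on $S$ whose edges are the pairs at distance $\ell$, and set $n=\abs{S}$. Because every triple of $S$ is congruent to $T$, every triple contains exactly one $\ell$-edge. First, $G_\ell$ is a matching: if two $\ell$-edges shared a vertex, the triple they span would have two sides of length $\ell$, impossible for a triangle congruent to $T$; hence $G_\ell$ has at most $\lfloor n/2\rfloor$ edges. Second, a double count of the incidences (triple, its unique $\ell$-edge) gives $\abs{E(G_\ell)}\,(n-2)=\binom{n}{3}$, so $\abs{E(G_\ell)}=n(n-1)/6$. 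Combining the two estimates yields $n(n-1)/6\leq n/2$, that is $n\leq 4$.

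The non-equilateral bound is thus a short double count, and what I find appealing is that the equilateral case is exactly the case in which no side length occurs exactly once, so the matching argument has nothing to latch onto; this is the structural reason the equilateral case genuinely requires the separate, dimension-dependent simplex analysis. The only points needing care are the small-$S$ bookkeeping (for $\abs{S}\leq 2$ all bounds hold trivially, and one uses $n\geq 3$ to run the counts) and confirming that $n=4$ is consistent with $\abs{E(G_\ell)}=2=\lfloor 4/2\rfloor$, in agreement with the known planar value $F_2(1)=4$.
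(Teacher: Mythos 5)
Your proposal is correct, and both halves take routes genuinely different from the paper's. For part (1), the paper proves only the cardinality bound, and does so by induction on $d$: assuming $d+2$ points, the first $d$ form a regular $(d-1)$-simplex, the last two are forced onto the normal line through its circumcenter, and a Pythagorean computation (reducing to $\tfrac14 e^2+\tfrac13 e^2=e^2$ in the base case $d=3$) gives a contradiction. Your Gram-matrix argument --- the conditions $\abs{v_i}=\abs{v_i-v_j}=a$ force $G=\tfrac{a^2}{2}(I+J)$, which is positive definite, so the $v_i$ are linearly independent and at most $d$ in number --- is dimension-uniform, avoids the induction, and has the added benefit of actually delivering the characterization ``$S$ is contained in the vertex set of a regular $d$-simplex,'' which the theorem asserts but which the paper's Lemma \ref{lem: Simplex} does not by itself supply. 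For part (2), the paper fixes a point $\Oc$ and pigeonholes the four distances from $\Oc$ to the rest of $S$, splitting into the scalene case (only three distinct distances exist, so some distance from $\Oc$ repeats, producing a forbidden isosceles triangle) and the isosceles case (three edges of length $d_1$ at $\Oc$ force an equilateral triangle of side $d_2$); your double count on a side length $\ell$ of multiplicity one --- the $\ell$-edges form a matching yet must number exactly $n(n-1)/6$ --- handles both cases uniformly and is arguably cleaner. One caveat applies equally to your argument and to the paper's: since $T(P)$ is defined via non-collinear triples only, the hypothesis does not literally say that \emph{every} triple of $S$ is congruent to $T$, so your claims that every triple contains exactly one $\ell$-edge and that two $\ell$-edges sharing a vertex span a forbidden triangle (and likewise the paper's appeals to $\triangle\Oc AB$) need a sentence ruling out or handling collinear triples; this is a shared, easily repaired omission rather than a defect specific to your approach.
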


We can then state the following corollary in the language of optimal point configurations:

\begin{cor}
Let $F_d(t)$ denote the maximum number of points which can be placed in $\R^d$ to determine exactly $t$ distinct triangles. Then
\begin{enumerate}
    \item \label{thmpart: dim3}
    $F_3(1) = 4$ and the only configurations which achieve this are the vertices of a square, a rectangle, or a tetrahedron, and
    \item \label{thmpart: dimd}
    $F_d(1) = d + 1$ when $d > 3$ and the only configuration which achieves this is the regular $d$-simplex.
\end{enumerate}
\end{cor}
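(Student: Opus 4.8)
The plan is to extract the numerical values of $F_d(1)$ directly from Theorem~\ref{thm: Main} and then to do the extra structural work needed to identify the extremal sets. For the upper bound, if $S\subset\R^d$ determines one triangle $T$ then Theorem~\ref{thm: Main} gives $\abs{S}\le d+1$ when $T$ is equilateral and $\abs{S}\le 4$ otherwise, so $\abs{S}\le\max(d+1,4)$, which equals $4$ for $d=3$ and $d+1$ for $d>3$. For the matching lower bounds I would exhibit realizations: the $d+1$ vertices of a regular $d$-simplex are pairwise equidistant, so every $3$-subset is an equilateral triangle of one fixed size and the set determines a single triangle, giving $F_d(1)\ge d+1$; for $d=3$ the four vertices of a square determine the single right-isosceles triangle, giving $F_3(1)\ge4$. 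Together these prove $F_3(1)=4$ and $F_d(1)=d+1$ for $d>3$.

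For uniqueness when $d>3$, any extremal $S$ has $\abs{S}=d+1>4$, so the non-equilateral alternative of Theorem~\ref{thm: Main} (which forces $\abs{S}\le4$) is impossible and $T$ must be equilateral. Part~\ref{thmpart:equil} then puts $S$ inside the vertex set of a regular $d$-simplex, and since $\abs{S}=d+1$ is exactly the number of those vertices, $S$ is the whole vertex set; this is part~\ref{thmpart: dimd}. Running the same argument with $d=3$ disposes of the equilateral case in dimension three and yields the regular tetrahedron.

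The substantive remaining task is to classify the four-point sets in $\R^3$ whose single triangle $T$ is \emph{not} equilateral. I would argue on edge lengths: the four faces are the $\binom{4}{3}=4$ triangles carried by the $\binom{4}{2}=6$ edges, and each edge lies in exactly two faces. Counting, over all faces, how often each side length of $T$ occurs shows the six edges split as $(2,2,2)$ when $T$ is scalene and as $(4,2)$ when $T$ is isosceles. A short rainbow argument finishes the combinatorics: two equal-length edges sharing a vertex would create a face with a repeated side, contradicting congruence to $T$, so equal edges must be opposite (disjoint). In either case $S$ is forced to be an \emph{isosceles tetrahedron}, one in which all three pairs of opposite edges are equal.

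It remains to decide realizability, and this is where I expect the main obstacle. Realizing such a configuration as the four alternating vertices of a rectangular box with dimensions $x,y,z$, the three pairs of opposite edges become the face diagonals, with squared lengths $x^2+y^2$, $y^2+z^2$, $x^2+z^2$, which are the squared sides of $T$; solving for $x^2,y^2,z^2$ shows a genuine (nondegenerate) tetrahedron exists exactly when $T$ is acute, the box flattens exactly when $T$ is right, and no embedding exists when $T$ is obtuse. Examining the flat case shows the degenerate box is a rectangle, which is a square precisely when the right triangle $T$ is isosceles. This recovers exactly the square, the rectangle, and the tetrahedron, completing part~\ref{thmpart: dim3}. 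The edge-counting and the reduction to an isosceles tetrahedron should be routine; verifying the acute/right/obtuse trichotomy and that the flat configurations are exactly rectangles and squares is the delicate part.
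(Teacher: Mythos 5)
Your proposal is correct and follows the same skeleton as the paper: the numerical values of $F_d(1)$ and the uniqueness of the regular $d$-simplex for $d>3$ are read off from Theorem \ref{thm: Main} exactly as the paper intends, and the classification of the four-point configurations in $\R^3$ proceeds, as in the paper's Remark \ref{rek:weird_tetrahedra} and Section \ref{sec:rekconstructs}, by first pinning down the distance graph and then asking which graphs are realizable. Two comments on where you diverge. First, your combinatorial step is stated too strongly: the claim that ``two equal-length edges sharing a vertex would create a face with a repeated side, contradicting congruence to $T$'' fails when $T$ is isosceles, since a face with the repeated side $d_1$ appearing twice \emph{is} congruent to $T$; indeed four of the six edges have length $d_1$, so they cannot be pairwise disjoint. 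The argument should be restricted to the non-repeated length: the two $d_2$-edges cannot share a vertex (a face has only one $d_2$-side), hence are opposite, and the remaining four edges are $d_1$ --- which still yields the disphenoid graph with all three opposite pairs equal, so your conclusion stands. The paper instead derives the same graph by fixing a vertex $\Oc$ and propagating congruence through the faces. Second, your realization step (inscribing the tetrahedron in a box with edge squares $x^2+y^2$, $y^2+z^2$, $x^2+z^2$ and reading off the acute/right/obtuse trichotomy) is genuinely cleaner and more complete than the paper's, which only exhibits a one-parameter family by lifting two vertices of a planar square or rectangle out of the plane and does not explicitly rule out realizations when $T$ is obtuse; since the six pairwise distances determine the configuration up to isometry, your box computation settles both existence and uniqueness in one stroke. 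Neither you nor the paper explicitly rules out a four-point set containing a collinear triple, but this is immediate: three collinear points $A,B,C$ with $B$ between $A$ and $C$ and a fourth point $D$ would force the non-degenerate triangles $ABD$, $BCD$, $ACD$ to be congruent while having equal heights from $D$ and bases satisfying $AC=AB+BC$, which is impossible.
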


We also make three observations. First, in $\R^d$, $d > 3$ the $d$-simplex is the unique optimal point configuration, yet in dimensions 2 and 3 this is not so. Second, in addition to the above, the $2$-simplex fails to be optimal in $\R^2$. Third, note that in $\R^3$ there are two optimal configurations, viz. the tetrahedron (3-simplex) and the rectangle (to include the square), while in $\R^2$ and $\R^d$, $d > 3$, there is a single family of solutions (by considering the square to be a special case of the rectangle.), and hence, the $d$-simplex fails to be unique. This transition that happens in $\R^3$ is surprising and novel. In addition to the above, we have the following notable remark:

\begin{rek} \label{rek:weird_tetrahedra}
	In $d=3$, both Theorem \ref{thm: Main} (\ref{thmpart:equil}) and (\ref{thmpart:nonequil}) yield optimal configurations. These configurations are specifically the vertices of the regular tetrahedron (\ref{thmpart:equil}) and the vertices of the square, the vertices of a tetrahedron with isosceles faces and the vertices of a tetrahedron with scalene faces (\ref{thmpart:nonequil}). These can be uniquely determined as distance graphs which can be realized in $\R^3$ in the above ways. 
\end{rek}

In Section \ref{sec:rekconstructs} we offer a proof of this remark by way of a construction of said distance graphs and point sets in $\R^3$ that satisfy them. This remark is particularly interesting as our framework for an upcoming paper in preprint arrives at constructions for optimal configurations determining few distinct triangles by considering the number of distinct distances that can be determined by such configurations \cite{Brenner}. This remark shows that there can exist distinct optimal configurations determining a given number of distinct triangles that determine different numbers of distinct distances. Interestingly, this is not the case for optimal configurations determining two distinct triangles, which may only determine two distinct distances.

%%%%%%%%%%%%%%%%%%%%%%%%%%%%%%%%%%%%%%%%%%%%%%%%%%%%%%%%%%%%%%%%%%%%%%%%%%%%%%%
%DEFINITIONS AND LEMMAS%
\section{Definitions and Lemmas}
We formalize the concepts of a triangle and set out our notation with the following definitions:

\begin{defi}
Given a finite point set $P \subset \R^d$, $d \geq 3$, we say two triples $(a,b,c)$, $(a',b',c') \in P^3$ are equivalent if there is an isometry mapping one to the other, and we denote this as $(a,b,c) \sim (a',b',c')$.
\end{defi}

\begin{defi}
Given a finite point set $P \subset \R^d$, $d \geq 3$, we denote by $P_{nc}^3$ the set of non-collinear triples $(a,b,c) \in P^3$.
\end{defi}

\begin{defi}
Given a finite point set $P \subset \R^2$, we define the set of distinct triangles determined by $P$ as
\begin{equation}
T(P) := P_{nc}^3 / \sim.
\end{equation}
\end{defi}

In this paper when we discuss and count the number of distinct triangles of a finite point set $P \in \R^d$ we are precisely working with the set $T(P)$. Note that this excludes degenerate triangles where all three points lie on a line.

\begin{defi}
\label{defi: Distance}
Let $p$ and $q$ be points in $\R^d$ for $d \geq 1$. We denote the Euclidean distance between $p$ and $q$ by $d(p,q)$.
\end{defi}

Theorem \ref{thm: Main} (\ref{thmpart:equil}) is a direct consequence of the following lemma:
\begin{lem}
\label{lem: Simplex}
Let $S$ be a set of points in $\R^d$, $d\geq 3$, which defines a single distinct equilateral triangle. Then $S$ has at most $d + 1$ points.
\end{lem}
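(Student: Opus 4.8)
The plan is to reduce the statement to the classical bound on the size of an \emph{equilateral set} (a set all of whose pairwise distances coincide) in $\R^d$. The crucial observation is that the hypothesis---that $S$ determines a single equilateral triangle, say of common side length $s$---actually forces \emph{every} pair of points of $S$ to lie at distance exactly $s$. Once this is established, the bound $|S| \le d+1$ is a routine piece of linear algebra.

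First I would prove the equidistance claim by contradiction. Suppose $a, b \in S$ with $d(a,b) = r \neq s$. Because $S$ determines at least one (genuine, non-degenerate) triangle, $S$ is \emph{not} contained in a single line. Now take any third point $c \in S$: if the triple $(a,b,c)$ were non-collinear it would form a triangle equivalent to $T$, hence equilateral of side $s$, forcing $d(a,b) = s$, a contradiction. Thus every $c \in S$ must be collinear with $a$ and $b$, so $S$ lies entirely on the line through $a$ and $b$, contradicting non-collinearity. Hence no off-distance pair exists, and $S$ is an equilateral set with common distance $s$.

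For the second step I would invoke the standard Gram-matrix argument. Writing $S = \{p_0, p_1, \dots, p_m\}$ and setting $v_i = p_i - p_0$ for $1 \le i \le m$, the polarization identity $v_i \cdot v_j = \tfrac12\bigl(|v_i|^2 + |v_j|^2 - |v_i - v_j|^2\bigr)$ gives diagonal inner products $s^2$ and off-diagonal inner products $s^2/2$. The Gram matrix is therefore $\tfrac{s^2}{2}(I_m + J_m)$, where $J_m$ is the all-ones matrix; its eigenvalues are $m+1$ (once) and $1$ (with multiplicity $m-1$), all strictly positive, so the Gram matrix is positive definite. Consequently $v_1, \dots, v_m$ are linearly independent in $\R^d$, which yields $m \le d$ and hence $|S| = m+1 \le d+1$.

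I expect the only genuinely subtle point to be the first step: one must explicitly use that $S$ is not collinear---guaranteed precisely because $S$ determines a bona fide triangle---in order to rule out an off-distance pair. The positive-definiteness computation in the second step is entirely standard and should present no obstacle.
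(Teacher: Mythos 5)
Your proof is correct, but it follows a genuinely different route from the paper's. You first show that the hypothesis forces every pair of points of $S$ to be at the common distance $s$ (the collinearity dichotomy you use for this is sound: an off-distance pair would force all other points onto the line through it, contradicting the existence of a non-degenerate triangle), and then you invoke the classical bound on equilateral sets via the Gram matrix $\tfrac{s^2}{2}(I_m+J_m)$, whose positive definiteness gives linear independence of the $m$ difference vectors and hence $m\le d$. The paper instead proceeds by induction on the dimension: assuming $d+2$ points, it places the two extra points on the line normal to the hyperplane of a face through its circumcenter and derives a numerical contradiction from the Pythagorean theorem (the $\tfrac{7}{12}=1$ computation in the base case $d=3$). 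Your argument buys a clean, dimension-uniform, coordinate-free proof that makes fully explicit the equidistance step which the paper only asserts implicitly (``any triplet forms the triangle $T$, and so they must form a $(d-1)$-simplex''); the paper's approach buys a more elementary, purely synthetic-geometric argument, at the cost of an induction whose inductive step leans on the base-case computation. Since the paper itself notes the lemma is the well-known one-distance-set bound, your write-up is essentially the standard proof of that known result, and it is complete as stated.
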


%%%%%%%%%%%%%%%%%%%%%%%%%%%%%%%%%%%%%%%%%%%%%%%%%%%%%%%%%%%%%%%%%%%%%%%%%%%%%%
%PROOF OF THEOREM%
\section{Proof of Theorem \ref{thm: Main}}
%%%%%%%%%%%%%%%%%%%%%%%%%%%%%%%%%%%%%%%%%%%%%%%%%%%%%%%%%%%%%%%%%%%%%%%%%%%%%%

As stated previously, Theorem \ref{thm: Main} (\ref{thmpart:equil}) follows directly from Lemma \ref{lem: Simplex}, so we will omit proof in this section in favor of proving Lemma \ref{lem: Simplex} in Section \ref{sec:pfsoflemmas}. Then, to prove Theorem \ref{thm: Main} (\ref{thmpart:nonequil}), we will consider the case where $T$ is isosceles and the case where $T$ is scalene separately. In both cases, we assume towards a contradiction that there exists a point set $S$ containing 5 points determining such a triangle. Our argument will be made purely on distance graphs and will thus not depend on dimension.

\begin{proof}[Proof of \ref{thm: Main} (\ref{thmpart:nonequil})]
  Assume towards a contradiction that there exists a point set $S$ containing five points which determines one distinct non-equilateral triangle. For convenience, we then split into cases, dealing with scalene and isosceles triangles separately. 
	\begin{description}[align=left]
		\item[Scalene] Fix an arbitrary point $\Oc$ in $S$ and consider the distances from $\Oc$ to the remaining 4 points. Note that clearly a point set determining exactly one distinct, scalene triangle determines only three distinct distances. So, by the pigeonhole principle, two of the distances from $\Oc$ to the remaining points are equal. Without loss of generality, say that the repeated distance is $d_1$ and specifically $\Oc A = \Oc B = d_1$. Then, clearly $\triangle \Oc AB$ is an isosceles triangle, but we assumed that the only distinct triangle determined by this point set is scalene, so we have the desired contradiction.
		\item[Isosceles] Let $d_1$ denote the repeated edge length of $T$ and $d_2$ the remaining edge length. Similarly to the above, fix an arbitrary point $\Oc$ and consider the distances from $\Oc$ to the remaining points. Clearly, by the same argument as the above, if any two of these distances were $d_2$, an isosceles triangle with repeated edge length $d_2$ would be determined, which would be a contradiction. So, assume that at least three of the four distances are $d_1$ and label the three points determining them $A$, $B$ and $C$. Then consider each of the triangles consisting of two of the points and $\Oc$, e.g. $\triangle \Oc AB$. Clearly this must be congruent to $T$, and $\Oc A = \Oc B = d_1$, so we must have $AB=d_2$. The same holds for all such triangles, so we have $AB = BC = AC = d_2$. But, again, we assumed that the only triangle determined by $S$ was isosceles, so this equilateral triangle of edge length $d_2$ yields a contradiction.  
	\end{description}

\end{proof}

%%%%%%%%%%%%%%%%%%%%%%%%%%%%%%%%%%%%%%%%%%%%%%%%%%%%%%%%%%%%%%%%%%%%%%%%%%%%%%
%PROOF OF LEMMAS%
\section{Proofs of Lemmas}\label{sec:pfsoflemmas}
It is well known in the literature that Lemma \ref{lem: Simplex} holds, and that in $\R^d, d\geq 2$ a set of points determining a single distinct distance has at most $d + 1$ points. However, in the interest of completeness, we include here a proof of the lemma by induction on the dimension $d$:
\begin{proof} [Proof of Lemma \ref{lem: Simplex}]~
\begin{description}
    \item[Base Case ($d = 3$)] Let $S = \{A_1, A_2, A_3, A_4, A_5\} \subset \R^d = \R^3$ be a point set containing $d + 2 = 5$ points, which defines a single distinct equilateral triangle, call it $T$. Thus, the triangle $\triangle A_1A_2A_3$ must form the triangle $T$. Define $e$ to be the edge length of $T$, and let $P$ denote the plane defined by $\{A_1, A_2,A_3\}$.
    
    Since $S$ defines an equilateral triangle, it follows that $A_4$ and $A_5$ must be equidistant from $\{A_1,A_2,A_3\}$ and lie upon a line normal to $P$, which goes through a point $p \in P$, where $p$ is equidistant to $\{A_1,A_2,A_3\}$, $p$ is called the circumcenter of the equilateral triangle $\triangle A_1A_2A_3$.
    
    Since $p$ is the circumcenter of the equilateral triangle $\triangle A_1A_2A_3$, it follows that
    $$d(A_1,p) = d(A_2,p) = d(A_3,p) = \dfrac{\sqrt{3}}{3}e$$
    Since $S$ defines only the triangle $T$, it follows that $d(A_4,A_5) = e$. Since $A_4$, $A_5$, and $p$ lie upon the same line, it follows that:
    $$ d(A_4, p) + d(p, A_5) = d(A_4, A_5)$$
    Since $A_4$ and $A_5$ are equidistant from $\{A_1,A_2,A_3\}$, they are also equidistant from the plane $P$, and hence, $d(A_4, p) = d(p, A_5) = \frac{e}{2}$. Applying the Pythagorean Theorem we obtain:
    $$d(A_4, p)^2 + d(A_1, p)^2 = d(A_4, A_1)^2$$
    Which yields:
    $$\left(\frac{e}{2}\right)^2 + \left( \frac{\sqrt{3}e}{3} \right)^2 = (e)^2$$
    Thus, we obtain:
    $$\frac{1}{4}e^2 + \frac{1}{3}e^2 = e^2$$
    Which implies that $\frac{7}{12} = 1$, a clear contradiction. We note that the vertices of a $3$-simplex, the regular tetrahedron, give a configuration in $\R^3$ which defines a single distinct equilateral triangle and has $3 + 1 = 4$ points. Therefore, a set $S$ defining a single distinct equilateral triangle in $\R^3$ can have at most $3 + 1 = 4$ points.
    
    \item[Inductive Assumption] Suppose that for dimensions $n < d$, a point set $S$ defining a single distinct equilateral triangle can have at most $n+1$ points.
    
    \item[Inductive Step] Now let $S = \{A_1, \ldots, A_d, A_{d+1}, A_{d+2}\} \subset \R^d$ be a point set containing $d + 2$ points, which defines a single distinct equilateral triangle, call it $T$. Thus, the points $\{A_1, \ldots, A_d\}$ must be such that any triplet forms the triangle $T$, and so they must form a $(d-1)$-simplex (This by our Inductive Assumption). Call this $(d-1)$-simplex $\triangle A_1\cdots A_d$. Let $e$ be the edge length of $T$, and let $P$ be the $d-1$ dimensional hyper-plane defined by $\{A_1, \ldots, A_d\}$.
    
    Since $S$ defines an equilateral triangle, it follows that $A_{d+1}$ and $A_{d+2}$ must be equidistant to $\{A_1, \ldots, A_d\}$ and lie upon a line normal to $P$, which passes through a point $p \in P$ (vis. the circumcenter of $\triangle A_1 \cdots A_d$), where $p$ is equidistant from $\{A_1, \ldots, A_d\}$.
    
    From this point, we can follow the same construction as in the Base Case, using the points $A_1$, $p$, $A_{d+1}$, and $A_{d+2}$ to arrive at a contradiction. We note here that the $d$-simplex gives a configuration in $\R^d$ which defines a single distinct equilateral triangle and has $d+1$ points. Thus, a set $S$ in $\R^d$ defining a single distinct equilateral triangle can have at most $d+1$ points, as desired.
\end{description}
\end{proof}

\section{Constructions for Remark \ref{rek:weird_tetrahedra}}\label{sec:rekconstructs}

Clearly the regular tetrahedron is an optimal configuration (of four points) determining one distinct triangle in three dimensions, and it is the unique configuration satisfying Theorem \ref{thm: Main} (\ref{thmpart:equil}). Then, in the following, we will characterize the distances graphs of the four point (and thus optimal in three dimensions) sets satisfying Theorem \ref{thm: Main} (\ref{thmpart:nonequil}). We then provide constructions of point sets in $\R^3$ satisfying these distance graphs. For convenience, we will again consider the isosceles and scalene cases separately. Assume in all that follows that $S$ is a set of four points determining one distinct triangle of the respective geometry.

\begin{description}
	\item[Isosceles] Let $d_1$ be the repeated edge length of $T$. Following the framework used in the main proof, fix a point $\Oc$ in $S$ such that $\Oc A = d_1$ and $\Oc B = d_2$ for some $A$ and $B$ in $S$. Then, notice that given the remaining point $C$ in $S$, $\triangle \Oc AB \simeq \triangle \Oc CB \simeq T$, so specifically $\Oc C = BC = AB = d_1$, and similarly, we must have $AC = d_2$. 
	
	Notice that any four non-coplanar points form the vertices of a tetrahedron (if the points are coplanar, this distance graph is clearly uniquely realized by the vertices of the square). And, by the above, a tetrahedron $ABCD$ satisfying the above distance graph must have $AB = d_2$ and $CD = d_2$ and the remaining edges $d_1$. To construct such a tetrahedron, consider taking points $P= (d_2/2, 0, 0)$, $Q = (-d_2/2, 0, 0)$, $R=(0, d_2/2, 0)$ and $S=(0, -d_2/2, 0)$. Note that for $d_1 = \sqrt{2}/2 d_2$, $PQRS$ satisfies the above distance graph, although it is planar. Then, by arbitrarily translating $R$ and $S$ by the same distance along the $z$ axis, we can construct any tetrahedron satisfying this distance graph (clearly no tetrahedron with $d_1$ not satisfying this inequality may exist). 
	
	\item[Scalene] Let $A$, $B$ and $C$ determine $\triangle ABC \simeq T$ in $S$. Without loss of generality let $AB = d_1$, $BC = d_2$ and $AC = d_3$. Then, notice that each point may determine each distance exactly once (otherwise they would determine an isosceles triangle, producing a contradiction similar to that used in the main proof. Then, since each point $A$, $B$ and $C$ determines exactly two of the distances, we may simply fill in that $AD= d_2$, $BD = d_3$ and $CD = d_1$. It is easy to verify that this distance graph determines only one distinct triangle.
	
	Similarly to the isosceles case, we observe that if all four points are coplanar, this distance graph uniquely determines a rectangle. Supposing instead that four points $A$, $B$, $C$ and $D$ satisfy the above distance graph and are non-coplanar (thus form a tetrahedron), we clearly must then have that, up to relabeling, $AB = CD = d_1$, $AC = BD = d_2$ and $AD = BC = d_3$. Namely, ``opposite'' pairs of edges of the tetrahedron are congruent. To construct all such tetrahedra, consider fixing points $P = (d_1/2, 0, 0)$ and $Q = (-d_1/2, 0, 0)$. Then, fix $R^\prime = (0, d_1/2, z^\prime)$ and $S\prime = (0, -d_1/2, z^\prime)$ for arbitrary $z^\prime$. Consider the unique circle lying in the plane $z = z^\prime$ passing through $R^\prime$ and $S^\prime$. Then choose $R$ and $S$ as the endpoints of any diameter of this circle such that $R, S \neq R^\prime, S^\prime$ and $P$, $Q$, $R$ and $S$ are not all coplanar. Clearly, then, PQRS satisfies the above distance graph.
\end{description}

\section{Acknowledgements}

We would like to thank an anonymous referee for suggesting the inclusion of Theorem \ref{thm: Main} in its current form. Their proposed restructuring around this result has significantly improved the paper.

%%%%%%%%%%%%%%%%%%%%%%%%%%%%%%%%%%%%%%%%%%%%%%%%%%%%%%%%%%%%%%%%%%%%%%%%%%%%%%
%BIBLIOGRAPHY%

\end{document}